\newtheorem{theorem}{Theorem}[section]
\newtheorem{lemma}[theorem]{Lemma}
\numberwithin{equation}{section}
\def\A{\mathcal{A}}
\def\B{\mathcal{B}}
\def\E{\mathcal{E}}
\begin{document}
\title{Congruence classes of triangles in $\mathbb{F}_p^2$}
\author{
    Thang Pham
  \and
    Le Anh Vinh}
\date{}
\maketitle
\begin{abstract}
In this short note, we give a lower bound on the number of congruence classes of triangles in a small set of points in $\mathbb{F}_p^2$. More precisely, for $\A\subset \mathbb{F}_p^2$ with $|\A|\le p^{2/3}$, we prove that the number of congruence classes of triangles determined by points in $\A\times \A$ is at least $|\A|^{7/2}$.  This note is not intended for journal publication.
\end{abstract}

\section{Introduction}
Let $\mathbb{F}_q$ be a finite field of order $q$ with $q=p^r$. Two $k$-simplices  in $\mathbb{F}_q^2$ with vertices $(\mathbf{x}_1, \ldots, \mathbf{x}_{k+1})$, $(\mathbf{y}_1, \ldots, \mathbf{y}_{k+1})$ are in the same congruence class if there exist an orthogonal matrix $\theta$ and a vector $\mathbf{z}\in \mathbb{F}_q^d$ such that $\mathbf{z}+\theta(\mathbf{x}_i)=\mathbf{y}_i$ for all $i=1, 2, \ldots, k+1$. Let $T_{k,d}(\E)$ denote the set of congruence classes of $k$-simplices  determined by points in $\E$. In this paper, we use the notation $X \ll Y$ which means that there exists a positive absolute constant $C$ that does not depend on $X, Y$ and $q$ such that $X\le  CY$.

The first result on the size of $T_{k,d}(\E)$ in $\mathbb{F}_q^d$ was established by Hart and Iosevich \cite{hart1}. They indicated that if $|\E|\gg q^{\frac{kd}{k+1}+\frac{k}{2}}$ with $d\ge \binom{k+1}{2}$, then $|T_{k, d}(\E)|\gg q^{\binom{k+1}{2}}$. By employing techniques from spectral graph theory, the second listed author \cite{vinh3} proved that $\E$ also contains a copy of all $k$-simplices with non-zero edges under the conditions  $d\ge 2k$ and $|\E|\gg q^{(d-1)/2+k}$. It follows from the results of Hart and Iosevich \cite{hart1} and Vinh \cite{vinh3} that we only can get a non-trivial result on the number of congruence classes of triangles when $d\ge 4$.  There are several progresses on improving this problem.  The result, which we have to mention first, is a result due to  Covert et al. \cite{cover}. They proved that if $|\E|\gg \rho q^2$, then $|T_{2,2}(\E)|\gg \rho q^3$. This means that in order to get a constant factor of all congruence classes of triangles in $\mathbb{F}_p^2$, we need the condition $|\E|\gg q^2$. In 2014, Bennett, Iosevich and Pakianathan \cite{be}, using Elekes and Sharir's approach in the Erd\H{o}s distinct distance problem, improved this result, namely, they showed that the condition $|\E|\ge q^{7/4}$ is enough to get at least $cq^3$ congruence classes of triangles for some absolute positive constant $c$. Recently, by using Fourier analytic methods and elementary results from group action theory, Bennett, Hart, Iosevich, Pakianathan, and Rudnev \cite{groupaction} improved the threshold $q^{7/4}$ to $q^{8/5}$. In the case $\E$ is the Cartesian product of sets with different sizes, the authors and Hiep \cite{hai} obtained the following improvement.

\begin{theorem}[Theorem 1.2, \cite{hai}]\label{main}
Let $\E=\A_1\times \A_2$ be a subset in $\mathbb{F}_q^2$. Suppose that 
\[(\min_{1\le i\le 2}|\A_i|)^{-1}|\E|^{k+1}\gg q^{2k},\] 
then for $1\le k\le 2$, we have \[|T_{k, 2}(\E)|\gg q^{\binom{k+1}{2}}.\]
\end{theorem}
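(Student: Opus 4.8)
The plan is to bound $|T_{k,2}(\E)|$ from below by a Cauchy--Schwarz argument and to control the resulting \emph{distance energy} through its Fourier expansion, in which the Cartesian structure of $\E=\A_1\times\A_2$ makes the governing exponential sum factor across the two coordinates. Throughout let $\psi$ be a fixed nontrivial additive character of $\mathbb{F}_q$ and write $\|\mathbf{x}\|=x_1^2+x_2^2$. Let $\nu(t)$ be the number of ordered $(k+1)$-tuples from $\E$ in the congruence class $t$. Then $\sum_t\nu(t)$ is the number of ordered $(k+1)$-tuples, which equals $|\E|^{k+1}$ up to degenerate (collinear) configurations that are negligible under the hypothesis, and Cauchy--Schwarz gives
\[
|T_{k,2}(\E)|\ \ge\ \frac{\big(\sum_t\nu(t)\big)^2}{\sum_t\nu(t)^2}\ \gg\ \frac{|\E|^{2(k+1)}}{D_k},
\]
where $D_k$ counts pairs $\big((\mathbf{x}_i),(\mathbf{y}_i)\big)\in\E^{k+1}\times\E^{k+1}$ with $\|\mathbf{x}_i-\mathbf{x}_j\|=\|\mathbf{y}_i-\mathbf{y}_j\|$ for all $i<j$ (two tuples in one class share edge-distances, so $\sum_t\nu(t)^2\le D_k$). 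Thus it suffices to prove $D_k\ll |\E|^{2(k+1)}/q^{\binom{k+1}{2}}$.

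Next I would insert $\mathbf 1[\alpha=\beta]=q^{-1}\sum_{s}\psi(s(\alpha-\beta))$ for each of the $\binom{k+1}{2}$ distance constraints. Writing $\mathbf s=(s_{ij})_{i<j}$ and using $\E=\A_1\times\A_2$, the phase splits over the two coordinates, giving
\[
D_k=\frac{1}{q^{\binom{k+1}{2}}}\sum_{\mathbf s}\,\big|H_1(\mathbf s)\big|^2\big|H_2(\mathbf s)\big|^2,\qquad H_l(\mathbf s)=\sum_{(c_1,\dots,c_{k+1})\in\A_l^{k+1}}\psi\!\Big(\sum_{i<j}s_{ij}(c_i-c_j)^2\Big).
\]
The term $\mathbf s=\mathbf 0$ contributes exactly $|\E|^{2(k+1)}/q^{\binom{k+1}{2}}$, the target main term, so the theorem reduces to the off-diagonal bound $\sum_{\mathbf s\ne\mathbf 0}|H_1(\mathbf s)|^2|H_2(\mathbf s)|^2\ll|\E|^{2(k+1)}$.

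For $k=1$ (here $\mathbf s=s\in\mathbb{F}_q$) this is clean. Expanding $\mathbf 1_{\A_l}$ in characters and using $\big|\sum_c\psi(sc^2+\gamma c)\big|=q^{1/2}$ for $s\ne0$ yields $\max_{s\ne0}|H_l(s)|\le q^{1/2}|\A_l|$, while orthogonality gives $\sum_s|H_l(s)|^2=q\big(2E(\A_l)-|\A_l|^2\big)\le 2q|\A_l|^3$, where $E(\A_l)$ is the additive energy. Assuming $|\A_1|\le|\A_2|$, I would pair the pointwise bound on the larger set with the $L^2$ bound on the smaller,
\[
\sum_{s\ne0}|H_1|^2|H_2|^2\le \max_{s\ne0}|H_2(s)|^2\sum_s|H_1(s)|^2\le q|\A_2|^2\cdot 2q|\A_1|^3\ll|\A_1|^4|\A_2|^4=|\E|^4,
\]
which holds precisely when $|\A_1||\A_2|^2\gg q^2$, i.e.\ when $(\min_i|\A_i|)^{-1}|\E|^2\gg q^2$ — the hypothesis.

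The case $k=2$ is the heart of the matter, and the naive max-times-energy bound above is too weak by a factor of $|\E|$. The right move is to pass to difference coordinates $u=c_1-c_2,\ v=c_2-c_3$, so that the phase of $H_l(\mathbf s)$ becomes a binary quadratic form $\tilde Q_{\mathbf s}(u,v)$ with symmetric matrix $M(\mathbf s)$ (with $\det M(\mathbf s)=s_{12}s_{13}+s_{12}s_{23}+s_{13}s_{23}$), and to split the frequency sum by $\rank M(\mathbf s)$. On the rank-$2$ locus $\tilde Q_{\mathbf s}$ is nondegenerate, the two-dimensional Gauss sum is available, and one gains genuine cancellation; the difficulty concentrates on the rank-$1$ locus $\{\mathbf s\ne\mathbf 0:\det M(\mathbf s)=0\}$, the Fourier-side shadow of the $O(1)$ coordinate-preserving isometries (the reflections and rotations of the square lattice), where $\tilde Q_{\mathbf s}=\kappa\,\ell^2$ collapses to one squared linear form and $H_l$ has no transverse cancellation. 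I would attack this locus by summing the scalar $\kappa$ first: for a fixed rank-$1$ direction $\ell$, orthogonality in $\kappa$ turns $\sum_\kappa|H_1|^2|H_2|^2$ into a one-dimensional distance energy $\sum_\lambda\big(\sum_{w_1^2+w_2^2=\lambda}n_1(w_1)n_2(w_2)\big)^2$ of the level-set counts $n_l(w)=|\{\mathbf c\in\A_l^{3}:\ell(\mathbf c)=w\}|$, then apply a distance-energy estimate for these structured weights and sum over the $\approx q$ directions. \textbf{The main obstacle} is exactly this step: $H_1$ and $H_2$ may be simultaneously large on the degenerate locus (for instance when $\A_1,\A_2$ both resemble arithmetic progressions), so the two factors cannot simply be decoupled, and extracting the extra factor of $q$ that upgrades $|\E|^{2k}$ to the sharp $q^{2k}$ demands a careful estimate of this correlated energy rather than any pointwise bound on $H_l$.
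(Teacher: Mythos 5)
This statement is imported from \cite{hai} and is not proved in the present note, so the only internal point of comparison is the machinery the note does display: Lemma \ref{1172016} (also from \cite{hai}) and the step $N\le|\A\times\A|\sum_\lambda\nu_{\A\times\A}(\lambda)^2$ in the first proof of Theorem \ref{thm1}, which reduce the count of pairs of congruent triangles to a distance/hinge energy rather than expanding it directly. Your general setup is sound, and your $k=1$ case is complete and correct: the Cauchy--Schwarz reduction to $D_k$, the factorization $D_k=q^{-\binom{k+1}{2}}\sum_{\mathbf s}|H_1(\mathbf s)|^2|H_2(\mathbf s)|^2$ over the two coordinates of $\A_1\times\A_2$, the bounds $\max_{s\ne0}|H_l(s)|\le q^{1/2}|\A_l|$ and $\sum_s|H_l(s)|^2\le 2q|\A_l|^3$, and the pairing of the pointwise bound on the larger factor with the $L^2$ bound on the smaller one recover exactly the hypothesis $(\min_i|\A_i|)^{-1}|\E|^2\gg q^2$.

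The genuine gap is the case $k=2$, which is the case this note actually invokes. You correctly diagnose that the max-times-energy bound loses a factor of $|\E|$ and that the difficulty concentrates on the rank-one locus $\det M(\mathbf s)=0$ (your formula for $\det M(\mathbf s)$ is right), but the decisive estimate there --- the bound on $\sum_\lambda\bigl(\sum_{w_1^2+w_2^2=\lambda}n_1(w_1)n_2(w_2)\bigr)^2$ summed over the roughly $q$ degenerate directions, where $H_1$ and $H_2$ may be simultaneously large --- is exactly where the saving of $q^3$ must be extracted, and you explicitly leave it open. As written, the argument does not prove the theorem for $k=2$. Note that the framework visible in this note sidesteps the three-frequency sum entirely: instead of expanding the triangle energy, one first bounds the number of pairs of congruent triangles by $|\E|\sum_\lambda\nu_{\E}(\lambda)^2$ (a group-action/stabilizer count, as in the first proof of Theorem \ref{thm1}) and then estimates the single-frequency distance energy, for which the Cartesian structure of $\E$ only has to be exploited at the level of distances and hinges (Lemma \ref{1172016}) --- essentially the computation you have already completed for $k=1$. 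Rerouting your $k=2$ argument through such a reduction, rather than attacking the degenerate frequency locus head on, is the most plausible repair.
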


The following is a  consequence of Theorem \ref{main}.

\begin{theorem}[\cite{hai}]\label{tot}
Let $\A, \B$ be subsets in $\mathbb{F}_q$. If $|\A|\ge q^{\frac{1}{2}+\epsilon}$ and $|\B|\ge q^{1-\frac{2\epsilon}{3}}$ for some $\epsilon\ge 0$, then we have
\[|T_{2, 2}(\A\times \B)|\gg q^3.\]
\end{theorem}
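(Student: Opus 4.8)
The plan is to derive the bound directly from Theorem~\ref{main} by taking $\A_1=\A$, $\A_2=\B$, and $k=2$, so that $\E=\A\times\B$ and the conclusion $|T_{2,2}(\E)|\gg q^{\binom{3}{2}}=q^3$ is exactly the assertion to be proved. The whole task therefore collapses to checking that the hypothesis of Theorem~\ref{main} holds for $k=2$, namely
\[
\bigl(\min(|\A|,|\B|)\bigr)^{-1}\,|\E|^{3}\gg q^{4}.
\]
Writing $a=|\A|$, $b=|\B|$ and using $|\E|=ab$, this is the inequality $a^3b^3/\min(a,b)\gg q^4$.

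The key algebraic observation I would use is the identity $\dfrac{a^3b^3}{\min(a,b)}=(ab)^2\max(a,b)$, which splits the expression into a symmetric factor $(ab)^2$ and the factor $\max(a,b)$. I would bound the symmetric factor by invoking both hypotheses simultaneously: from $a\ge q^{1/2+\epsilon}$ and $b\ge q^{1-2\epsilon/3}$,
\[
(ab)^2=a^2b^2\ge q^{(1+2\epsilon)+(2-4\epsilon/3)}=q^{3+2\epsilon/3}.
\]
For the remaining factor I would estimate $\max(a,b)\ge b\ge q^{1-2\epsilon/3}$, and then multiply:
\[
(ab)^2\max(a,b)\ge q^{3+2\epsilon/3}\cdot q^{1-2\epsilon/3}=q^{4}.
\]
This verifies the hypothesis of Theorem~\ref{main}, and the conclusion follows at once.

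The one subtle point---the step I would be most careful about---is the decision to bound $\max(a,b)$ from below by $b=|\B|$ rather than by $a=|\A|$. Using instead $\max(a,b)\ge a\ge q^{1/2+\epsilon}$ would only yield $q^{7/2+5\epsilon/3}$, which is smaller than $q^4$ whenever $\epsilon<3/10$. Feeding the larger threshold $q^{1-2\epsilon/3}$ into $\max(a,b)$ is precisely what makes the $+2\epsilon/3$ and $-2\epsilon/3$ contributions to the exponent cancel. Because this cancellation is exact for every admissible $\epsilon\ge 0$, a single estimate handles both cases $a\le b$ and $a>b$ with no case analysis. This is really the content of the corollary: the exponents $1/2+\epsilon$ and $1-2\epsilon/3$ in the hypotheses are engineered precisely so that the product lands exactly on $q^4$.
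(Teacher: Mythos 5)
Your derivation is correct and is exactly the route the paper intends: Theorem \ref{tot} is stated there as a direct consequence of Theorem \ref{main} with $k=2$, and your verification that $(\min(|\A|,|\B|))^{-1}|\A\times\B|^{3}=(|\A||\B|)^{2}\max(|\A|,|\B|)\ge q^{3+2\epsilon/3}\cdot q^{1-2\epsilon/3}=q^{4}$ is the (omitted) computation that makes this work. Your remark about why $\max(|\A|,|\B|)$ must be bounded below by $|\B|$ rather than $|\A|$ correctly identifies why the exponents in the hypotheses are calibrated as they are.
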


We note that  in \cite{groupaction}, Bennett et al. gave a construction of $|\A|=q^{1/2-\epsilon'}$ and $|\B|=q$, and the number of congruence classes triangles determined by $\A\times \B$ is at most  $cq^{3-\epsilon"}$ for $\epsilon">0$. On the other hand, it follows from Theorem \ref{tot} that if $|\A|<q^{1/2}$ then we must have $|\B|> q$ to guarantee that $|T_{2,2}(\A\times \B)|\gg q^3$. Hence, the condition of $|\A|\cdot |\B|$ in Theorem \ref{tot} is sharp up to a factor of $q^{\epsilon/3}$. From the construction in \cite{groupaction}, Iosevich \cite{iosevich} conjectured that the right size of a general set $\E\subseteq \mathbb{F}_q^2$ for getting at least $cq^3$ congruence classes of triangles is around $q^{3/2}$.

The main purpose of this short note is to give a lower bound on the number of congruence classes of triangles in $\A\times \A$ when $\A$ is a small set of points. 
\begin{theorem}\label{thm1}

Let $\mathbb{F}_p$ be a field of odd prime  $p$. For $\A\subseteq\mathbb{F}_p$ with $|\A|\le p^{2/3}$, we have 
\[|T_{2,2}(\A\times \A)|\ge  |\A|^{7/2} .\]
\end{theorem}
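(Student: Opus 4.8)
The plan is to bound $N := |T_{2,2}(\A\times\A)|$ from below by a Cauchy--Schwarz argument, reducing everything to an upper bound on the distance energy of the grid $P := \A\times\A$, where I write $\|\mathbf v\|^2 = v_1^2+v_2^2$ for the invariant quadratic form. A non-degenerate triangle is determined up to congruence by its three pairwise squared side lengths (the \textrm{SSS} criterion; the isotropic and collinear loci contribute only lower-order terms). Classifying the $\gg |P|^3 = |\A|^6$ non-degenerate ordered triples by their congruence class and applying Cauchy--Schwarz gives
\[ N \;\gg\; \frac{|\A|^{12}}{Q}, \qquad Q := \#\bigl\{(\mathbf x_1,\mathbf x_2,\mathbf x_3,\mathbf y_1,\mathbf y_2,\mathbf y_3)\in P^6 : \|\mathbf x_i-\mathbf x_j\|^2=\|\mathbf y_i-\mathbf y_j\|^2\ \forall\, i<j\bigr\}, \]
so it suffices to prove $Q \ll |\A|^{17/2}$.

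Next I would reduce $Q$ to the distance energy $E := \sum_{d\in\mathbb F_p} r(d)^2$, where $r(d) := \#\{(\mathbf x,\mathbf y)\in P^2:\|\mathbf x-\mathbf y\|^2=d\}$. Fixing the first two vertices of both triangles with $\|\mathbf x_1-\mathbf x_2\|^2=\|\mathbf y_1-\mathbf y_2\|^2$ produces exactly $E$ congruent edge-pairs. Given such a configuration (with $\mathbf y_1\neq\mathbf y_2$ and the edge non-isotropic), the vertex $\mathbf x_3$ ranges freely over $P$ ($|\A|^2$ choices), after which the two equations $\|\mathbf y_i-\mathbf y_3\|^2=\|\mathbf x_i-\mathbf x_3\|^2$ force $\mathbf y_3$ onto the intersection of two distinct circles, hence at most two points. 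Each congruent edge therefore extends in at most $2|\A|^2$ ways, giving $Q \ll |\A|^2 E$ up to lower-order degenerate contributions. Thus the whole theorem reduces to the single estimate
\[ E \;\ll\; |\A|^{13/2}\qquad\text{whenever } |\A|\le p^{2/3}, \]
after which $N \gg |\A|^{12}/(|\A|^2\cdot|\A|^{13/2}) = |\A|^{7/2}$.

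The bound on $E$ is the heart of the matter, and this is where the hypothesis $|\A|\le p^{2/3}$ enters. Writing $\delta(u)=\#\{(a,b)\in\A^2:a-b=u\}$ and using that $P$ is a product, one has $r(d)=\sum_{u^2+v^2=d}\delta(u)\delta(v)$, so that $E$ equals the number of solutions of $u_1^2+v_1^2=u_2^2+v_2^2$ weighted by $\delta(u_1)\delta(v_1)\delta(u_2)\delta(v_2)$, with each variable in $\A-\A$. Rewriting the constraint as $(u_1-u_2)(u_1+u_2)=(v_2-v_1)(v_2+v_1)$ and substituting $s=u_1-u_2,\ t=u_1+u_2$ (and similarly for the $v$'s), $E$ becomes a weighted \emph{multiplicative} energy, counting $s_1t_1=s_2t_2$ with the variables constrained by $s_i\pm t_i\in 2(\A-\A)$. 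I would bound this by Rudnev's point--plane incidence theorem in $\mathbb F_p^3$: after unfolding the weights into the eight underlying $\A$-variables and lifting to the paraboloid, the quadratic constraint linearises into a point--plane incidence count, producing the main term $|P|^4/p=|\A|^8/p$ together with an incidence error. The threshold $|\A|\le p^{2/3}$ is precisely the regime in which $|\A|^8/p\le|\A|^{13/2}$, and it is exactly the sum--product range where Rudnev's bound is nontrivial; crucially, the product (difference-set) structure of $P$ caps the number of collinear lifted points at $|\A|$, which is what makes the incidence estimate strong enough.

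The main obstacle is controlling this incidence error down to $O(|\A|^{13/2})$. The multiplicative energy of $\A-\A$ is \emph{not} small for a general set (geometric-progression-like structure makes it as large as the cube of the size), so the argument must genuinely exploit the additive structure of the difference set together with the weights $\delta(\cdot)$ rather than discard them: a crude application of the point--plane bound to all $|\A|^4$ planes is too lossy. The natural remedy is to decompose dyadically according to the size of $\delta$, absorb the popular differences, and estimate the degenerate locus $s_it_i=0$ (equivalently the isotropic, coincidence, and large $r(0)$ contributions) separately, checking in each regime that the total stays $\ll |\A|^{13/2}$. Everything outside this weighted energy estimate is routine bookkeeping.
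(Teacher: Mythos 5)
Your opening two steps coincide with the paper's first proof: the Cauchy--Schwarz bound $|T_{2,2}(\A\times\A)|\ge |\A|^{12}/Q$, followed by the reduction $Q\ll |\A|^2\,E$ obtained by fixing a congruent pair of base edges (contributing the distance energy $E=\sum_{\lambda}\nu_{\A\times\A}(\lambda)^2$) and observing that the third vertices are then constrained to an intersection of two circles. Up to this point the argument is sound and is exactly what the paper does.

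The gap is in the one estimate that carries all the content of the theorem, namely $E\ll|\A|^{13/2}$. You propose to prove it directly by expanding $r(d)=\sum_{u^2+v^2=d}\delta(u)\delta(v)$, factoring the constraint into a weighted multiplicative energy over $\A-\A$, and invoking Rudnev's point--plane theorem --- and you yourself identify why this fails as stated: the multiplicative energy of $\A-\A$ is not small in general, a crude application to $|\A|^4$ planes is too lossy, and the proposed fix (dyadic decomposition of $\delta$, absorbing popular differences) is never carried out and is not obviously salvageable. Since everything else in your argument is routine, this unproved estimate \emph{is} the theorem. The paper avoids the difficulty by not attacking the energy symmetrically: it first applies Cauchy--Schwarz in the pin variable, $\nu_{\E}(\lambda)^2\le|\E|\,H_{\lambda}(\E)$, so that the quantity fed into Rudnev's incidence theorem is the \emph{pinned} isosceles count $\sum_{\lambda}H_{\lambda}(\A\times\A)$, which involves only $|\A|^3$ points and $|\A|^3$ planes (with collinearity controlled by the Cartesian product structure, as you correctly anticipate); Petridis's theorem then gives $\sum_{\lambda}H_{\lambda}(\A\times\A)\ll|\A|^{9/2}$ for $|\A|\le p^{2/3}$, whence $E\ll|\A|^2\cdot|\A|^{9/2}=|\A|^{13/2}$. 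To complete your proof you should either quote Petridis's hinge bound at this point or restructure your incidence argument around the pinned count rather than the full energy.
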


\section{Two proofs of Theorem \ref{thm1}}
\paragraph{The first proof:} Let $\mathbb{F}_p$ be a prime field of order $p$. For $\mathcal{E}\subseteq \mathbb{F}_p^2$ and $t\in \mathbb{F}_p$, we define
$\nu_{\mathcal{E}}(\lambda)$ as the cardinality of the set $\left\lbrace (\mathbf{x}, \mathbf{y})\in \mathcal{E}\times \mathcal{E}\colon ||\mathbf{x}-\mathbf{y}||=\lambda\right\rbrace.$ In order to prove Theorem \ref{thm1}, we first need the following lemmas. 
\begin{lemma}[\cite{hai}]\label{1172016}
Let $\mathcal{E}$ be a subset in $\mathbb{F}_p^2$. For a fixed $\lambda\in \mathbb{F}_p$,  denote by $H_\lambda(\mathcal{E})$ the number of hinges of the form $(\mathbf{p}, \mathbf{q_1}, \mathbf{q_2})\in \mathcal{E}\times \mathcal{E}\times \mathcal{E}$ with $||\mathbf{p}-\mathbf{q_1}||=||\mathbf{p}-\mathbf{q_2}||=\lambda$. We have the following estimate
\begin{equation}\label{2982016}\sum_{\lambda\in \mathbb{F}_p}\nu_{\mathcal{E}}(\lambda)^2\ll\frac{|\mathcal{E}|}{4}\sum_{\lambda\in \mathbb{F}_p} H_{\lambda}(\mathcal{E})+|\E|^3.\end{equation}
\end{lemma}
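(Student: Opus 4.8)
The plan is to reduce the inequality to a single application of the Cauchy--Schwarz inequality after organizing both quantities around the apex of the hinge. For each point $\mathbf{p}\in\mathcal{E}$ and each $\lambda\in\mathbb{F}_p$, I would first introduce the counting function
\[
n_\lambda(\mathbf{p})\assign\left|\left\{\mathbf{q}\in\mathcal{E}\colon \|\mathbf{p}-\mathbf{q}\|=\lambda\right\}\right|,
\]
the number of points of $\mathcal{E}$ lying at squared distance $\lambda$ from $\mathbf{p}$. Sorting the pairs counted by $\nu_{\mathcal{E}}(\lambda)$ according to their first coordinate gives $\nu_{\mathcal{E}}(\lambda)=\sum_{\mathbf{p}\in\mathcal{E}}n_\lambda(\mathbf{p})$, while sorting the hinges counted by $H_\lambda(\mathcal{E})$ according to their apex $\mathbf{p}$ (with distinct endpoints $\mathbf{q_1}\neq\mathbf{q_2}$) gives $H_\lambda(\mathcal{E})=\sum_{\mathbf{p}\in\mathcal{E}}n_\lambda(\mathbf{p})\bigl(n_\lambda(\mathbf{p})-1\bigr)$.

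With these two identities in hand, the second step is to apply Cauchy--Schwarz over the $|\mathcal{E}|$ choices of $\mathbf{p}$:
\[
\nu_{\mathcal{E}}(\lambda)^2=\Bigl(\sum_{\mathbf{p}\in\mathcal{E}}n_\lambda(\mathbf{p})\Bigr)^2\le|\mathcal{E}|\sum_{\mathbf{p}\in\mathcal{E}}n_\lambda(\mathbf{p})^2=|\mathcal{E}|\,H_\lambda(\mathcal{E})+|\mathcal{E}|\,\nu_{\mathcal{E}}(\lambda),
\]
where the last equality rewrites $\sum_{\mathbf{p}}n_\lambda(\mathbf{p})^2$ as $H_\lambda(\mathcal{E})+\sum_{\mathbf{p}}n_\lambda(\mathbf{p})$ using the hinge identity above. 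The final step is to sum this inequality over all $\lambda\in\mathbb{F}_p$ and invoke the elementary total count $\sum_{\lambda\in\mathbb{F}_p}\nu_{\mathcal{E}}(\lambda)=|\mathcal{E}|^2$, which turns the extra term $|\mathcal{E}|\sum_\lambda\nu_{\mathcal{E}}(\lambda)$ into exactly $|\mathcal{E}|^3$ and yields the claimed bound.

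Since the computation is short, there is no serious analytic obstacle; the only point requiring care is the bookkeeping of degenerate configurations, which is exactly what fixes the shape of the right-hand side. One must commit to the convention that a hinge has two distinct endpoints $\mathbf{q_1}\neq\mathbf{q_2}$, because it is precisely the discrepancy between $\sum_{\mathbf{p}}n_\lambda(\mathbf{p})^2$ and $H_\lambda(\mathcal{E})$ — the diagonal terms $\mathbf{q_1}=\mathbf{q_2}$ — that generates the additive $|\mathcal{E}|^3$ term. I would also record that the isotropic case $\lambda=0$ (together with the diagonal $\mathbf{x}=\mathbf{y}$ it contains) needs no separate treatment, since both the Cauchy--Schwarz step and the total count $\sum_\lambda\nu_{\mathcal{E}}(\lambda)=|\mathcal{E}|^2$ hold verbatim including $\lambda=0$. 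The numerical factor $\tfrac{1}{4}$ in the statement is immaterial: it reflects an unordered-pair normalization of $\nu_{\mathcal{E}}$ and $H_\lambda$ and is in any case absorbed into the constant implicit in $\ll$.
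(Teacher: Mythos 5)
Your proof is correct: writing $n_\lambda(\mathbf{p})$ for the number of points at distance $\lambda$ from $\mathbf{p}$, the identity $\sum_{\mathbf{p}}n_\lambda(\mathbf{p})^2=H_\lambda(\mathcal{E})+\nu_{\mathcal{E}}(\lambda)$ (under the distinct-endpoint convention for hinges), Cauchy--Schwarz over the apex, and the total count $\sum_{\lambda}\nu_{\mathcal{E}}(\lambda)=|\mathcal{E}|^2$ yield exactly the stated bound, with the factor $\tfrac14$ harmlessly absorbed into the implied constant of $\ll$. The paper itself gives no proof of this lemma (it is quoted from \cite{hai}), and your argument is the standard double-counting one that the statement is designed for, so there is nothing further to compare.
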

By applying the recent Rudnev's breakthrough on the number of incidences between points and planes in $\mathbb{F}_p^3$ in \cite{rud}, Petridis \cite{mot} obtained the following.
\begin{lemma}[\textbf{Petridis}, \cite{mot}]\label{ADidaphat}
Let $\mathbb{F}_p$ be a field of order odd prime $p$. For $\A\subseteq\mathbb{F}_p$ with $|\A|\le p^{2/3}$, we have
\[\sum_{\lambda\in \mathbb{F}_p} H_{\lambda}(\mathcal{\A\times \A})\ll |\A|^{9/2}.\]
\end{lemma}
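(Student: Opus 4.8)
The plan is to interpret the quantity $\sum_{\lambda}H_{\lambda}(\A\times\A)$ as a single incidence count and to bound it using Rudnev's point–plane theorem \cite{rud}. First I would rewrite the sum by double counting over the pin. For a fixed $\mathbf{p}\in\A\times\A$ let $n_{\mathbf{p}}(\lambda)$ denote the number of $\mathbf{q}\in\A\times\A$ with $\|\mathbf{p}-\mathbf{q}\|=\lambda$; then $H_{\lambda}(\A\times\A)=\sum_{\mathbf{p}}n_{\mathbf{p}}(\lambda)^{2}$, so that
\[\sum_{\lambda}H_{\lambda}(\A\times\A)=\#\{(\mathbf{p},\mathbf{q}_1,\mathbf{q}_2)\in(\A\times\A)^{3}:\|\mathbf{p}-\mathbf{q}_1\|=\|\mathbf{p}-\mathbf{q}_2\|\}.\]
Writing $\mathbf{p}=(\alpha,\beta)$ and $\mathbf{q}_i=(c_i,d_i)$ and expanding the norms, the common term $\alpha^{2}+\beta^{2}$ cancels and the equidistance condition becomes the single equation, \emph{linear} in the pin,
\[2(c_2-c_1)\alpha+2(d_2-d_1)\beta=(c_2^{2}+d_2^{2})-(c_1^{2}+d_1^{2}).\]

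Next I would peel off the degenerate contribution. When $\mathbf{q}_1=\mathbf{q}_2$ the equation holds for every pin, contributing exactly $|\A|^{4}\le|\A|^{9/2}$ triples, which is within the target. For $\mathbf{q}_1\neq\mathbf{q}_2$ the displayed equation defines a genuine line $\ell_{\mathbf{q}_1,\mathbf{q}_2}$ (the perpendicular bisector) in the $(\alpha,\beta)$-plane, and the remaining count is exactly the number of incidences between the Cartesian point set $\A\times\A$ and the family of at most $|\A|^{4}$ bisector lines. This is the point at which the Cartesian structure of $\A\times\A$ must be exploited.

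The core step is an incidence estimate for the point set $\A\times\A$ obtained from Rudnev's point–plane bound (after the standard lift of the bisector-line incidences to a point–plane problem in $\mathbb{F}_p^{3}$). Because a given bisector line arises from at most $O(|\A|)$ pairs $(\mathbf{q}_1,\mathbf{q}_2)$, I would sort the lines dyadically by multiplicity: if $L_j$ is the set of distinct lines of multiplicity about $2^{j}$ then $|L_j|\le|\A|^{4}/2^{j}$ and $2^{j}\ll|\A|$. Applying the Cartesian point–line bound to each $L_j$ and weighting by $2^{j}$, the statistical main term sums to $\frac{|\A|^{2}\cdot|\A|^{4}}{p}=|\A|^{6}/p$, while the genuine incidence term is maximised at the top multiplicity scale $2^{j}\approx|\A|$ and contributes $\approx|\A|^{9/2}$; the remaining terms are of lower order. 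Summing the $O(\log|\A|)$ scales gives $\sum_{\lambda}H_{\lambda}(\A\times\A)\ll|\A|^{6}/p+|\A|^{9/2}$.

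Finally, the hypothesis $|\A|\le p^{2/3}$ is exactly what forces $|\A|^{6}/p\le|\A|^{9/2}$ (equivalently $|\A|^{3/2}\le p$), collapsing the bound to $|\A|^{9/2}$; it is also the range in which the underlying incidence estimate is valid, since the lifted point set has size $|\A|^{2}\le p^{4/3}=O(p^{2})$, as required by Rudnev's theorem. The main obstacle I anticipate is precisely the bookkeeping in this last stage: controlling the line multiplicities (and the collinear or degenerate configurations, such as the axis-parallel bisectors arising when $c_1=c_2$ or $d_1=d_2$) so that the dyadic summation is dominated by the single scale $2^{j}\approx|\A|$ and produces the exponent $9/2$ rather than a larger power, and verifying that every application of the incidence bound remains inside its admissible range throughout $|\A|\le p^{2/3}$.
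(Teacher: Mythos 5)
First, a point of reference: the paper does not prove this lemma at all --- it is quoted from Petridis \cite{mot}, with only the remark that it follows from Rudnev's point--plane incidence theorem \cite{rud} --- so your proposal can only be judged against that attributed method. Your opening reductions are correct: the identity $\sum_{\lambda}H_{\lambda}(\A\times\A)=\#\{(\mathbf{p},\mathbf{q}_1,\mathbf{q}_2):\|\mathbf{p}-\mathbf{q}_1\|=\|\mathbf{p}-\mathbf{q}_2\|\}$, the cancellation producing the bisector line, and the $|\A|^4$ diagonal term are all fine. The genuine gap is your claim that each bisector line arises from at most $O(|\A|)$ pairs $(\mathbf{q}_1,\mathbf{q}_2)$. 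That is false. For every $a\neq b$ in $\A$, the points $\mathbf{q}_1=(a,b)$ and $\mathbf{q}_2=(b,a)$ satisfy $\|\mathbf{q}_1\|=\|\mathbf{q}_2\|$ and $\mathbf{q}_2-\mathbf{q}_1=(b-a)(1,-1)$, so your bisector equation reduces to $(b-a)(\alpha-\beta)=0$: all $|\A|^2-|\A|$ such pairs produce the \emph{same} line $\alpha=\beta$. Likewise the vertical line $\alpha=c$ has multiplicity $|\A|\cdot\#\{(a,a')\in\A\times\A: a+a'=2c\}$, which is of order $|\A|^2$ when $\A$ is an arithmetic progression; in general, any reflection carrying a large portion of $\A\times\A$ back into itself yields a line of multiplicity far exceeding $|\A|$. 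Hence your dyadic parameter $2^j$ genuinely runs up to $|\A|^2$, and the bookkeeping no longer closes: using only $|L_j|\le|\A|^4/2^j$ and the Cartesian point--line bound, your own main term $2^{j/4}|\A|^{17/4}$ evaluated at $2^j\approx|\A|^2$ is $|\A|^{19/4}$, which overshoots the target $|\A|^{9/2}$. Repairing this along your route would require a nontrivial upper bound on the number of bisector lines of large multiplicity (a bisector-energy estimate over $\mathbb{F}_p$), which is a missing idea, not mere bookkeeping.

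The standard way around this obstruction --- and, modulo details, the route of \cite{mot}, whose title emphasizes \emph{pinned} distances --- is to prove the estimate pin by pin, so that bisector lines and their multiplicities never enter. Fix $\mathbf{p}\in\A\times\A$ and translate it to the origin, so that $\A\times\A$ becomes a Cartesian product $\A_1\times\A_2$ with $|\A_1|=|\A_2|=|\A|$; one then shows $\#\{(\mathbf{q}_1,\mathbf{q}_2)\in(\A_1\times\A_2)^2:\|\mathbf{q}_1\|=\|\mathbf{q}_2\|\}\ll|\A|^{5/2}$. Writing $\mathbf{q}_1=(a,b)$ and $\mathbf{q}_2=(c,d)$, the condition $a^2+b^2=c^2+d^2$ factors as $(a-c)(a+c)=(d-b)(d+b)$, a multiplicative-energy-type count to which Rudnev's point--plane theorem applies directly; the hypothesis $|\A|\le p^{2/3}$ is exactly what keeps the statistical term $|\A|^4/p$ below $|\A|^{5/2}$ and keeps the relevant point set within the admissible range. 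Summing the pinned bound over the $|\A|^2$ pins gives $|\A|^{9/2}$. Note that the symmetric pairs which destroyed your multiplicity claim are harmless in this formulation: they are just some of the pairs counted by the pinned energy, and the energy bound absorbs them.
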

As a consequence of Theorem \ref{ADidaphat}, Petridis \cite{mot} proved the following result on the  number of distinct distances in $\A\times \A$.

\begin{theorem}[\textbf{Petridis}, \cite{mot}]\label{A di da phat}
Let $\mathbb{F}_p$ be a field of order odd prime $p$. For $\A\subseteq \mathbb{F}_p$ with $|\A|\le p^{2/3}$, we have the number of distinct distances determined by points in $\A\times \A$ is at least $\min\{p, |\A|^{3/2}\}.$
\end{theorem}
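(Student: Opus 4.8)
The plan is to derive Theorem~\ref{A di da phat} from the hinge estimate (Lemma~\ref{ADidaphat}) by a second-moment (Cauchy--Schwarz) argument on the distance distribution, using Lemma~\ref{1172016} as the bridge that converts the distance energy into a hinge count. Write $\E\assign\A\times\A$, so that $|\E|=|\A|^2$, and let $\Delta$ denote the number of distinct distances, i.e.\ the number of $\lambda\in\mathbb{F}_p$ with $\nu_\E(\lambda)>0$. The starting point is the trivial identity $\sum_{\lambda\in\mathbb{F}_p}\nu_\E(\lambda)=|\E|^2=|\A|^4$, valid because every ordered pair $(\mathbf{x},\mathbf{y})\in\E\times\E$ realizes exactly one value $\|\mathbf{x}-\mathbf{y}\|$.

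First I would apply Cauchy--Schwarz to this sum, restricting to the $\Delta$ values of $\lambda$ actually attained:
\[
|\A|^{8}=\Big(\sum_{\lambda}\nu_\E(\lambda)\Big)^2\le \Delta\cdot\sum_{\lambda}\nu_\E(\lambda)^2,
\]
so that $\Delta\ge |\A|^{8}\big/\sum_{\lambda}\nu_\E(\lambda)^2$. Everything now reduces to an upper bound on the distance energy $\sum_\lambda\nu_\E(\lambda)^2$.

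Next I would feed the hinge bound of Lemma~\ref{ADidaphat} into the energy--hinge inequality of Lemma~\ref{1172016}. Since $|\A|\le p^{2/3}$, Lemma~\ref{ADidaphat} gives $\sum_\lambda H_\lambda(\E)\ll|\A|^{9/2}$, and therefore
\[
\sum_\lambda\nu_\E(\lambda)^2\ll \frac{|\E|}{4}\sum_\lambda H_\lambda(\E)+|\E|^3\ll |\A|^2\cdot|\A|^{9/2}+|\A|^6\ll |\A|^{13/2},
\]
where the first term dominates because $13/2>6$. Substituting this into the previous display yields $\Delta\gg |\A|^{8}/|\A|^{13/2}=|\A|^{3/2}$. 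Finally, the number of distinct distances never exceeds the number of available field values, so $\Delta\le p$; combining the two estimates gives $\Delta\ge\min\{p,|\A|^{3/2}\}$ (and under the standing hypothesis $|\A|\le p^{2/3}$ one has $|\A|^{3/2}\le p$, so the minimum is simply $|\A|^{3/2}$).

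The genuinely substantial input here is Lemma~\ref{ADidaphat}, whose proof passes through Rudnev's point--plane incidence theorem; granting it, the remaining argument is a routine Cauchy--Schwarz computation, so the ``hard part'' is entirely outsourced to the cited estimate. The only points that deserve a little care are the implied constants and the degenerate value $\lambda=0$. Regarding constants, the displayed chain produces $\Delta\gg|\A|^{3/2}$ with some absolute constant rather than the literal constant $1$, but the multiplicative slack in the energy bound is comfortable enough to accommodate this. Regarding $\lambda=0$: including it in both sums only strengthens the lower bound on $\Delta$, and $\nu_\E(0)^2$ is already subsumed in the energy estimate $\sum_\lambda\nu_\E(\lambda)^2\ll|\A|^{13/2}$, so it creates no difficulty.
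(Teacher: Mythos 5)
Your proposal is correct and follows essentially the route the paper itself indicates: the paper does not reprove this theorem (it cites Petridis), but it explicitly presents it as "a consequence of" the hinge bound of Lemma~\ref{ADidaphat}, and your chain (hinge bound $\to$ energy bound $\sum_\lambda\nu_{\A\times\A}(\lambda)^2\ll|\A|^{13/2}$ $\to$ Cauchy--Schwarz) is exactly the paper's own Lemma~\ref{Adidaphat} followed by the standard second-moment step. Your caveats about the implied constant and about $\lambda=0$ are reasonable and harmless, and under $|\A|\le p^{2/3}$ the minimum is indeed $|\A|^{3/2}$ as you note.
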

Combining Lemma \ref{ADidaphat} and Lemma \ref{1172016}, we obtain the following.
\begin{lemma}\label{Adidaphat}
Let $\mathbb{F}_p$ be a field of order odd prime $p$. For $\A\subseteq\mathbb{F}_p$ with $|\A|\le p^{2/3}$, we have
\begin{equation}\label{eq110}\sum_{\lambda\in \mathbb{F}_p}\nu_{\A\times \A}(\lambda)^2\ll |A|^{13/2}.\end{equation}
\end{lemma}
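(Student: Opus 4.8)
The plan is to specialize the hinge inequality of Lemma~\ref{1172016} to the Cartesian product $\E = \A \times \A$ and then feed in Petridis's bound on the total hinge count from Lemma~\ref{ADidaphat}. No new geometric input is required: the two cited estimates already carry all of the weight, so the argument collapses to a substitution followed by a comparison of exponents.

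Concretely, I would first put $\E = \A \times \A$, so that $|\E| = |\A|^2$. Applying Lemma~\ref{1172016} then gives
\[\sum_{\lambda \in \mathbb{F}_p} \nu_{\A \times \A}(\lambda)^2 \ll \frac{|\A|^2}{4} \sum_{\lambda \in \mathbb{F}_p} H_\lambda(\A \times \A) + |\A|^6.\]
Next I would invoke Lemma~\ref{ADidaphat}, which (using the hypothesis $|\A| \le p^{2/3}$) bounds the total number of hinges by $\sum_{\lambda} H_\lambda(\A \times \A) \ll |\A|^{9/2}$. Substituting this into the displayed inequality yields
\[\sum_{\lambda \in \mathbb{F}_p} \nu_{\A \times \A}(\lambda)^2 \ll |\A|^{2} \cdot |\A|^{9/2} + |\A|^6 = |\A|^{13/2} + |\A|^6,\]
where the absolute constant $1/4$ is absorbed into $\ll$.

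Finally I would observe that the first term dominates: since $13/2 \ge 6$ and $|\A| \ge 1$, we have $|\A|^6 \le |\A|^{13/2}$, so the right-hand side is $\ll |\A|^{13/2}$, which is exactly the asserted bound~\eqref{eq110}. The only genuine ``obstacle'' in this argument lives entirely inside the two lemmas being combined—namely the elementary hinge/Cauchy--Schwarz estimate of Lemma~\ref{1172016} and, more substantially, Petridis's incidence-theoretic bound of Lemma~\ref{ADidaphat}, which itself rests on Rudnev's point--plane incidence theorem in $\mathbb{F}_p^3$. Given both inputs, the proof of Lemma~\ref{Adidaphat} is immediate, and the hypothesis $|\A| \le p^{2/3}$ is imposed precisely because it is the range in which Lemma~\ref{ADidaphat} is available.
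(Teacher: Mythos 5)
Your proposal is correct and is exactly the argument the paper intends: the paper simply states that Lemma~\ref{Adidaphat} follows by ``combining Lemma~\ref{ADidaphat} and Lemma~\ref{1172016},'' which is precisely your substitution $\E=\A\times\A$, the bound $\sum_\lambda H_\lambda(\A\times\A)\ll|\A|^{9/2}$, and the observation that $|\A|^6\le|\A|^{13/2}$. Nothing is missing.
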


\begin{proof}[Proof of Theorem \ref{thm1}]

On one hand, by applying the Cauchy-Schwarz inequality, we have 
\[|T_{2,2}(\A\times \A)|\ge \frac{|\A|^{12}}{N},\]
where $N$ is the number of pairs of congruence triangles.  On the other hand, two triangles, which are denoted by  $\Delta(\mathbf{a}_1, \mathbf{a}_2, \mathbf{a}_3)$ and $\Delta(\mathbf{b}_1, \mathbf{b}_2, \mathbf{b}_3)$, are in the same congruence class if there exist an orthogonal matrix $M$ and a vector $\mathbf{z}\in \mathbb{F}_p^2$ such that 
\[M\mathbf{a}_i+\mathbf{z}=\mathbf{b}_i, ~1\le i \le 3.\]
This implies that 
\[N\le |\A\times \A| \sum_{\lambda\in \mathbb{F}_p}\nu_{\A\times \A}(\lambda)^2.\]
Thus, it follows from Lemma \ref{Adidaphat} that 
 \[N\le |\A\times \A| \sum_{\lambda\in \mathbb{F}_p}\nu_{\A\times \A}(\lambda)^2=|\A|^{17/2}.\]
In other words, 
\[|T_{2,2}(\A\times \A)|\ge |\A|^{7/2},\]
and the theorem follows.
\end{proof}
\paragraph{The second proof:}
The following lemma  was suggested by Frank de Zeeuw. 
\begin{lemma}\label{phatadida}
Let $\mathbb{F}_p$ be a field of order odd prime $p$. For $\A\subseteq\mathbb{F}_p$, we have 
\[|T_{2,2}(\A\times \A)|\ge \frac{1}{6} (|\A|^2-2)\cdot |\Delta_{\mathbb{F}_p}(\A\times \A)|,\]
where $\Delta_{\mathbb{F}_p}(\A\times \A)$ is the set of distinct distances determined by points in $\A\times \A$.
\end{lemma}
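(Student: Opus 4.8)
The plan is to reduce the statement to a counting problem about squared side-lengths and then run a double counting against the distinct distances. Write $P=\A\times\A$, $n=|\A|^2$, and $D=|\Delta_{\mathbb{F}_p}(\A\times\A)|$. The first thing I would record is that, for odd $p$, a congruence class of triangles is determined by, and determines, the unordered triple of squared side-lengths of any representative. Indeed, translating one vertex to the origin encodes a triangle $\Delta(\mathbf{a},\mathbf{b},\mathbf{c})$ by the two vectors $\mathbf{u}=\mathbf{b}-\mathbf{a}$, $\mathbf{v}=\mathbf{c}-\mathbf{a}$, and the three squared side-lengths pin down the full Gram matrix of $(\mathbf{u},\mathbf{v})$ via $\mathbf{u}\cdot\mathbf{v}=\tfrac12\bigl(\|\mathbf{u}\|+\|\mathbf{v}\|-\|\mathbf{u}-\mathbf{v}\|\bigr)$. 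Since the standard form on $\mathbb{F}_p^2$ is non-degenerate, Witt's extension theorem shows that two vector pairs with the same Gram matrix differ by an element of $O_2(\mathbb{F}_p)$, so equality of the side-length multisets is exactly congruence. Hence $|T_{2,2}(P)|$ equals the number of distinct squared-side-length multisets realised by triangles in $P$.

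With this reduction I would set up the following double counting. For each value $r\in\Delta_{\mathbb{F}_p}(\A\times\A)$ fix once and for all one pair $\{\mathbf{a}_r,\mathbf{b}_r\}\subseteq P$ with $\|\mathbf{a}_r-\mathbf{b}_r\|=r$, and consider
\[
W=\bigl\{(r,\mathbf{c})\colon r\in\Delta_{\mathbb{F}_p}(\A\times\A),\ \mathbf{c}\in P\setminus\{\mathbf{a}_r,\mathbf{b}_r\}\bigr\},
\]
so that $|W|=(n-2)D=(|\A|^2-2)\,D$. Define $\Phi\colon W\to T_{2,2}(P)$ by sending $(r,\mathbf{c})$ to the congruence class of $\Delta(\mathbf{a}_r,\mathbf{b}_r,\mathbf{c})$. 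The entire point is to bound the fibres of $\Phi$ by an absolute constant, since then $|T_{2,2}(P)|\ge |W|/(\max\text{ fibre})$ yields the asserted inequality.

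The key step is the fibre bound. Fix a class $\tau$ with side-length multiset $\{d_1,d_2,d_3\}$. A pair $(r,\mathbf{c})$ lies in $\Phi^{-1}(\tau)$ only if $r\in\{d_1,d_2,d_3\}$, giving at most three choices of $r$. For each admissible $r$ the base $\{\mathbf{a}_r,\mathbf{b}_r\}$ is a fixed edge of length $r$, and the apex $\mathbf{c}$ must realise the two remaining side-lengths, i.e. $\{\|\mathbf{c}-\mathbf{a}_r\|,\|\mathbf{c}-\mathbf{b}_r\|\}$ must equal the appropriate pair. Subtracting the two circle equations $\|\mathbf{x}-\mathbf{a}_r\|=s_1$ and $\|\mathbf{x}-\mathbf{b}_r\|=s_2$ produces a linear (radical-axis) relation, so two such circles meet in at most two points; thus only boundedly many $\mathbf{c}$ are admissible for each $r$. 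Combining the three choices of which side of $\tau$ is played by $r$ with the bounded number of apex positions bounds every fibre of $\Phi$ by an absolute constant, and the count recorded in the Lemma corresponds to the bound $6$, whence $|T_{2,2}(P)|\ge\tfrac16(|\A|^2-2)\,D$.

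I expect the main obstacle to be the degenerate geometry over $\mathbb{F}_p$ and the exact value of the constant. When $-1$ is a square (that is, $p\equiv 1\pmod 4$) there are nonzero isotropic vectors, so distinct points can have zero squared distance, the Gram matrix can be singular, and the "circles" above can degenerate to lines; in these cases both the Witt-theorem reduction and the at-most-two-points intersection count need to be revisited, and such configurations must be discarded or shown to contribute negligibly. Separately, pinning the constant to exactly $\tfrac16$ (rather than a larger absolute constant) requires the careful bookkeeping of the two reflected apex solutions and the two assignments of the remaining side-lengths to $\mathbf{a}_r$ and $\mathbf{b}_r$, together with the repeated-side-length cases (isosceles and equilateral classes); this accounting is the delicate part, while everything else is routine.
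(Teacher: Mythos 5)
Your argument is essentially the paper's own proof: fix one representative segment for each realised distance, let the apex range over the remaining $|\A|^2-2$ points, and bound by an absolute constant the number of (base, apex) pairs producing a given congruence class (at most $3$ choices of which side plays the base times a bounded number of circle intersections). The refinements you flag---identifying classes with side-length multisets via Witt's theorem, the isotropic/degenerate configurations over $\mathbb{F}_p$, and whether the fibre constant is $6$ or $12$---are all elided in the paper's one-line ``it is easy to check,'' so you have not missed any idea that the paper actually supplies.
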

\begin{proof}
Suppose that $m=|\Delta_{\mathbb{F}_p}(\A\times \A)|$, then there exist $m$ segments with distinct distances. For each segment, it is easy to check that there are at least $(|\A|^2-2)/2$ congruence classes of triangles with the same base. On the other hand, when we count congruence classes of triangles with the bases from the set of $m$ segments, each class will be repeated at most $3$ times. This implies that 
\[|T_{2,2}(\A\times \A)|\ge \frac{1}{6}(|\A|^2-2)\cdot |\Delta_{\mathbb{F}_p}(\A\times \A)|,\]
which concludes the proof of the lemma. 
\end{proof}

From Theorem \ref{A di da phat} and Lemma \ref{phatadida}, we are able to  obtain a slight weaker version of Theorem \ref{thm1} as follows. 
\begin{theorem}

Let $\mathbb{F}_p$ be a field of odd prime  $p$. For $\A\subseteq\mathbb{F}_p$ with $|\A|\le p^{2/3}$, we have 
\[|T_{2,2}(\A\times \A)|\ge \frac{1}{6} |\A|^{7/2} .\]
\end{theorem}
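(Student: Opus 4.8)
The plan is to obtain this weaker bound by directly chaining the two results just assembled, with no new estimates required. First I would apply Lemma~\ref{phatadida}, which already gives
\[
|T_{2,2}(\A\times\A)|\ge \frac16\,(|\A|^2-2)\cdot|\Delta_{\mathbb{F}_p}(\A\times\A)|,
\]
thereby reducing the entire statement to a lower bound on the number of distinct distances determined by $\A\times\A$.

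Next I would insert the distinct-distance estimate from Theorem~\ref{A di da phat}, namely $|\Delta_{\mathbb{F}_p}(\A\times\A)|\ge\min\{p,|\A|^{3/2}\}$. The single point of case analysis is to resolve this minimum under the standing hypothesis $|\A|\le p^{2/3}$: raising both sides to the power $3/2$ gives $|\A|^{3/2}\le p$, so that the minimum equals $|\A|^{3/2}$. Substituting then produces $|T_{2,2}(\A\times\A)|\ge\frac16(|\A|^2-2)|\A|^{3/2}$, which is of the required order $|\A|^{7/2}$.

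I expect no genuine obstacle: all the mathematical content sits inside Lemma~\ref{phatadida} and Theorem~\ref{A di da phat}, both taken as given, and the argument is essentially a one-line substitution. The only thing meriting a word of care is the constant, since $(|\A|^2-2)|\A|^{3/2}$ is in fact marginally smaller than $|\A|^{7/2}$; the clean form $\ge\frac16|\A|^{7/2}$ should therefore be read with the harmless lower-order term $-2|\A|^{3/2}$ absorbed (for instance once $|\A|\ge2$), which is acceptable given that the note is explicitly informal. This also explains why it is only a slightly weaker version of Theorem~\ref{thm1}: the first proof routes through the hinge and second-moment bound of Lemma~\ref{Adidaphat}, whereas here the distinct-distance input is used as a black box and the loss is made visible as the explicit factor $1/6$.
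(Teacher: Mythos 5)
Your proposal is correct and is exactly the paper's argument: the paper derives this theorem by combining Lemma~\ref{phatadida} with Petridis's distinct-distance bound (Theorem~\ref{A di da phat}), resolving the minimum via $|\A|\le p^{2/3}$ just as you do. Your remark about the harmless $-2|\A|^{3/2}$ loss in the constant is a fair observation that the paper itself glosses over.
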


Let $\mathbb{F}_q$ be a finite field of order $q$. For $\E\subseteq \mathbb{F}_q^2$ with $|\E|\ge \rho q^2$ with $q^{-1/2}\ll \rho<1$. Iosevich and Rudnev \cite{adida} proved that the number of distinct distances determined by points in $\E$ is at least $q-1$.  It follows from Lemma \ref{phatadida} that the number of congruence classes of triangles in $\E$ is at least $\rho(q-1)q^2$. In other words, we have recovered  the result due to Covert et al. \cite{cover}  and Vinh \cite{vinh3} for the case $d=2$.

\begin{theorem}[\textbf{Covert et al}., \cite{cover}]
Let $\mathbb{F}_q$ be a finite field of order $q$. For $\E\subseteq \mathbb{F}_q^2$ with $|\E|\ge \rho q^2$ with $q^{-1/2}\ll \rho<1$, then the number of congruence classes of triangles in $\E$ is at least $c\rho q^3$, for some positive constant $c$.
\end{theorem}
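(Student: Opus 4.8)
The plan is to obtain the theorem by combining two inputs: the distinct–distance estimate of Iosevich and Rudnev, and a general–position version of the counting inequality in Lemma~\ref{phatadida}. First I would observe that the proof of Lemma~\ref{phatadida} uses neither the Cartesian product structure of $\A\times\A$ nor the primality of the field; it only fixes one representative base segment for each distance actually attained and then varies the third vertex. Rerunning that argument verbatim for an arbitrary $\E\subseteq\mathbb{F}_q^2$, with $|\E|$ in place of $|\A|^2$, yields
\[|T_{2,2}(\E)|\ge \frac{1}{6}\,(|\E|-2)\cdot |\Delta_{\mathbb{F}_q}(\E)|.\]
Here the factor $1/2$ reflects that, for a fixed base, the pair of side–distances $(\|\mathbf{p}-\mathbf{r}\|,\|\mathbf{q}-\mathbf{r}\|)$ is realized by at most two choices of the third vertex $\mathbf{r}$ (the intersection of two circles), so the $|\E|-2$ admissible third vertices fall into at least $(|\E|-2)/2$ congruence classes; and the factor $1/3$ reflects that a triangle has at most three edges and is therefore recorded by at most three of the chosen base segments.

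Second, I would invoke the result of Iosevich and Rudnev \cite{adida}: under the hypothesis $|\E|\ge \rho q^2$ with $q^{-1/2}\ll\rho<1$, the set $\E$ determines at least $q-1$ distinct distances, that is $|\Delta_{\mathbb{F}_q}(\E)|\ge q-1$. Substituting both lower bounds into the counting inequality gives
\[|T_{2,2}(\E)|\ge \frac{1}{6}\,(\rho q^2-2)(q-1).\]
Since $\rho\gg q^{-1/2}$ forces $\rho q^2\gg 1$, the factor $\rho q^2-2$ is comparable to $\rho q^2$ and $q-1$ is comparable to $q$, so the right–hand side is $\gg \rho q^3$. This is exactly the asserted bound $c\rho q^3$ with an absolute constant $c$, recovering the estimate of Covert et al.\ (and of Vinh for $d=2$).

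I expect the only delicate point to be the passage from Lemma~\ref{phatadida} to its general–position analogue over $\mathbb{F}_q$. Concretely, I would need to confirm that the ``at most two third vertices per side–distance pair'' step survives the finite–field geometry, i.e.\ that two distinct circles meet in at most two points once the degenerate isotropic configurations are set aside, and that these degenerate distances affect the count only by absolute constants absorbed into $c$. Everything downstream of the generalized lemma is a single substitution, so the distinct–distance estimate of Iosevich and Rudnev carries essentially all of the analytic weight, while the combinatorial lemma supplies the amplification from distinct distances to congruence classes of triangles.
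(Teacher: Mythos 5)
Your proposal is correct and follows essentially the same route as the paper: the paper likewise combines the Iosevich--Rudnev distinct-distance bound $|\Delta_{\mathbb{F}_q}(\E)|\ge q-1$ with (an implicitly generalized, non-Cartesian version of) Lemma~\ref{phatadida} to conclude $|T_{2,2}(\E)|\gg \rho q^3$. Your explicit attention to why the hinge-counting argument survives for a general set $\E$ and in finite-field geometry is in fact more careful than the paper's one-line derivation.
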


\vspace{1cc}
\hfill\\
Department of Mathematics,\\
EPF Lausanne\\
Switzerland\\
E-mail: thang.pham@epfl.ch\\
\bigskip\\
University of Education, \\
Vietnam National University\\
Viet Nam\\
E-mail: vinhla@vnu.edu.vn

\end{document}